\newtheorem{theorem}{Theorem}[section]
\newtheorem{lemma}[theorem]{Lemma}
\newtheorem{proposition}[theorem]{Proposition}
\newtheorem{corollary}[theorem]{Corollary}
\theoremstyle{definition}
\newtheorem{definition}[theorem]{Definition}
\newtheorem{remark}[theorem]{Remark}
\newcommand{\Z}{\mathbb{Z}}
\newcommand{\italic}{\textit}
\newcommand{\del}{\partial}
\newcommand{\mapsonto}{\twoheadrightarrow}
\newcommand{\ndivide}{\nmid}
\begin{document}
\title[The Hopf Schur group]{Every Central simple algebra is Brauer equivalent to a Hopf Schur Algebra}
\author{Ehud Meir}
\date{23 November, 2010}
\maketitle
%\subjclass{16K50, 16T20}

\begin{abstract} We show that every central simple algebra $A$ over a field $k$ is
Brauer equivalent to a quotient of a finite dimensional Hopf algebra
over the same field. This shows that the natural generalization
of the Schur group for Hopf algebras (which we call the Hopf Schur group)
is in fact the entire Brauer group of $k$. If the
characteristic of the field is zero, or if the algebra has a Galois
splitting field with certain properties, we can
take this Hopf algebra to be semisimple. We also show that if $F$ is
any finite separable extension of $k$, then $F$ is a quotient of a finite
dimensional commutative semisimple and cosemisimple Hopf algebra over $k$.\end{abstract}

\begin{section}{Introduction}\label{intro}
\footnote{AMS subject classification: 16T20, 16K50}
Let $k$ be a field. In \cite{HS} we asked what central simple $k$-algebras
can we get (up to Brauer equivalence) as quotients of
finite dimensional Hopf algebras over $k$. We called such algebras Hopf Schur
algebras, and we defined the Hopf Schur group of $k$, $HS(k)$, as
the subgroup of $Br(k)$ which contains all classes of Hopf Schur
algebras. This is analogue to the definition of the Schur group,
$S(k)$, which contains all classes of central simple algebras which
are quotients of finite group algebras, and also to the
projective Schur group, $PS(k)$, which contains all classes of
central simple algebras which are quotients of finite dimensional
twisted group algebras. 

Since any group algebra is a Hopf algebra,
clearly $S(k)<HS(k)$. The Schur group is a ``small'' subgroup of
$Br(k)$. It is known by a theorem of Brauer (see \cite{IS}) that any
element in $S(k)$ has a cyclotomic splitting field, and therefore if
$k$ contains all roots of unity then $S(k)=0$, whereas $Br(k)$ may
be large (e.g. $k=\mathbb{C}(x_1,x_2,\ldots,x_n)$, $n\geq 2$, see
\cite{FS}). We refer the reader to \cite{J} and to \cite{Y} for a
comprehensive account on the Schur group. In \cite{HS} it was shown
that $HS(k)$ might be much bigger than $S(k)$. The authors have
proved that $PS(k)<HS(k)$. The projective Schur group is already a
much bigger subgroup of the Brauer group. It was conjectured that
$PS(k)=Br(k)$ and this is indeed the case for many interesting
fields (e.g. number fields). It was disproved, however, by Aljadeff
and Sonn in \cite{AS}. In \cite{HS} it was also proved that any
product of cyclic algebras is in $HS(k)$, and there is a conjecture
which says that this is all of the Brauer group. 

In this paper we will prove that $HS(k)=Br(k)$. 
In addition, we will give sufficient conditions for a central simple $k$-algebra
to be Brauer equivalent to a quotient of a \italic{semisimple} Hopf algebra.
For that reason, we introduce the following definition:
\begin{definition} Let $A$ be a central simple $k$-algebra. 
Denote by $m$ the order of $[A]$ in $Br(k)$. We will say that the algebra $A$ is \italic{good} if $char(k)=0$, 
or if $char(k)=p$, $p\ndivide m$, and $A$ has a Galois splitting field $L$ such that $p\ndivide |L(\xi_m):k|$, 
where $\xi_m$ is a primitive $m$th root of unity.\end{definition}
The main result of this paper is the following:
\begin{theorem}\label{mainth} Any $k$-central simple algebra $A$ is
Bruaer equivalent to a quotient of a finite dimensional Hopf algebra $H$.
(that is- $Br(k)=HS(k)$). If $A$ is a good algebra, then $A$ is Brauer equivalent to a 
quotient of a finite dimensional semisimple Hopf algebra.\end{theorem}

Since division algebras arise naturally as Endomorphism rings of
simple representations, we have the following:
\begin{corollary} Let $D$ be a $k$-central division
algebra. Then there is a finite dimensional Hopf algebra $H$, and a
simple representation $V$ of $H$ such that $End_H(V)\cong D$. If in
addition $D$ is good, then we can take $H$ to be semisimple.\end{corollary}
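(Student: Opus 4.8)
The plan is to read this off directly from Theorem \ref{mainth} together with Wedderburn's structure theorem, the only genuine care being the handling of opposite algebras.

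First I would recall that for a simple algebra $A \cong M_n(\Delta)$ with $\Delta$ a division algebra, there is (up to isomorphism) a unique simple $A$-module $V$, namely the column space $\Delta^n$, and a direct computation with right multiplications shows $End_A(V) \cong \Delta^{op}$. Since the corollary asks for $End_H(V) \cong D$ and not $D^{op}$, I would apply Theorem \ref{mainth} not to $D$ but to its opposite algebra $D^{op}$, which is again a $k$-central division algebra, with class $[D^{op}] = -[D]$ in $Br(k)$.

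Next I would invoke the theorem: since $Br(k)=HS(k)$, there is a finite dimensional Hopf algebra $H$ and a surjective $k$-algebra map $\pi\colon H \twoheadrightarrow A$ with $A$ Brauer equivalent to $D^{op}$; because $D^{op}$ is division, this forces $A \cong M_n(D^{op})$ for some $n$. Pulling the natural simple $A$-module $V$ back along $\pi$ makes it an $H$-module, and because $\pi$ is surjective the $H$-submodules of $V$ are exactly its $A$-submodules. Hence $V$ is simple over $H$, and for the same reason $End_H(V) = End_A(V) \cong (D^{op})^{op} = D$, which is the first assertion.

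For the semisimple refinement I would note that a field splits $D$ if and only if it splits $D^{op}$ (their Brauer classes are inverse, and both vanish under restriction to $Br(F)$ for the same $F$), so the hypothesis $char(k)\ndivide|F:k|$ carries over verbatim from $D$ to $D^{op}$; the semisimple half of Theorem \ref{mainth} then applies to $D^{op}$ and lets us take $H$ semisimple. The only point requiring attention in the whole argument is precisely this opposite-algebra bookkeeping, namely ensuring that we land on $D$ rather than $D^{op}$, since everything else is a formal consequence once the theorem is in hand.
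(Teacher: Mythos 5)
Your proposal is correct and follows essentially the same route as the paper: apply Theorem \ref{mainth} to obtain a surjection $H\twoheadrightarrow M_n(D^{op})$, pull back the simple module $(D^{op})^n$, and observe that its endomorphism ring is $(D^{op})^{op}\cong D$. The paper leaves the opposite-algebra bookkeeping and the simplicity of $V$ over $H$ as ``easy to see,'' whereas you spell them out, but the argument is the same.
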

\begin{proof} Let $D$ be a $k$-central division algebra.
By the above theorem, we have a Hopf algebra $H$ (seimsimple in case $D$ is good),
and an algebra map $\pi:H\mapsonto M_n(D^{op})$ for some $n$, where
$D^{op}$ is the algebra $D$ with opposite multiplication. Then $V =
(D^{op})^n$ is a representation of $H$ via $\pi$, and it is easy to
see that $End_H(V)\cong D$.\end{proof}
\begin{remark} In the proof above we have used the fact that $D$ is good if and only if $D^{op}$ is good.\end{remark}

In the course of the proof of Theorem \ref{mainth} we will consider, in Section
\ref{function}, forms of the function algebra of a finite group (i.e. the dual of a finite group algebra). As
a result, we will see that there might be an infinite number of
non-isomorphic semisimple and cosemisimple Hopf algebras over $k$ of a given
dimension, if $k$ is not algebraically closed. In \cite{St} Stefan proved that for a given number $n$, 
there are only finitely many isomorphism classes of semisimple and cosemisimple Hopf algebras of dimension $n$ 
over an algebraically closed field. We therefore conclude that the algebraic closeness assumption in Stefan's 
Theorem is necessary (this was observed also by Caenepeel Dascalescu and Le Bruyn in \cite{CDL}). 
Stefan's Theorem is a weaker form of Kaplansky's tenth conjecture, which states that for a given number 
$n$ there are only finitely many isomorphism classes of 
(not necessarily semisimple and cosemisimple) Hopf algebras of dimension $n$ over an algebraically closed field. 
Kaplansky's tenth conjecture was disproved by Andruskiewitsch and Schneider (see \cite{AS1}) by Beattie, 
Dascalescu and Grunenfelder (see \cite{BDG}), and by Gelaki (see \cite{Ge}) (and so, also the semisimplicity 
and the cosemisimplicity of the Hopf algebra is necessary in Stefan's Theorem).

The main idea behind the proof of Theorem \ref{mainth} will be the followiong observation:
if $L/k$ is a Galois extension of fields, then a Hopf algebra over $k$ is equivalent
to a Hopf algebra over $L$ together with a certain ``Hopf-semilinear'' action.
This generalizes the idea from descent theory, that an algebra over $k$ is the same thing as
an algebra over $L$ together with a certain ``semilinear'' action.
In descent theory, this idea gives a description of the relative Brauer group $Br(L/k)$ 
(that is- of all Brauer classes in $Br(k)$ which split by $L$). 
In our setting, this idea gives us a tool to construct a variety of nonisomorphic Hopf algebras over $k$ 
(which become isomorphic over $L$).
We will be able to show in Section \ref{main} that by choosing the ``right'' 
Hopf algebra and the ``right'' Hopf-semilinear action,
we will be able to ``catch'' all the Brauer equivalence classes in $Br(L/k)$.
Since $L$ is arbitrary, this will prove that $HS(k)=Br(k)$. 

We will begin by defining what are Hopf-semilinear actions in Section \ref{descent},
where we will also describe the relevant parts from descent theory which will be in use.
In Section \ref{function} we use descent theory in order to describe all the $k$-forms
of the Hopf algebra $L[T]$ of functions on some finite group $T$ (by this we mean
all the $k$-Hopf algebras which become isomorphic to $L[T]$ over $L$).
Another tool which we will need will be semidirect products of Hopf algebras.
This will be described in Section \ref{semidirect}.
In Section \ref{main} we will prove Theorem \ref{mainth}.
\end{section}

\begin{section}{Galois descent}\label{descent}
We shall need to use a very small portion of the descent theory in
here. The reader is referred to the second chapter of \cite{GS} for more comprehensive
treatment. Let $L/k$ be a Galois extension with a Galois group $G$.
\begin{definition}
Let $V$ be a vector space over $L$. A $G$-semilinear action on $V$
is an action of $G$ on $V$ as a $k$-vector space such that $g(x\cdot
v)= g(x)\cdot g(v)$, for every $g\in G$, $x\in L$ and $v\in
V$.\end{definition} We shall simply say ``semilinear action''
instead of ``$G$-semilinear action'' if the group is clear from the
context. The typical example we should have in mind for a semilinear
action is the following: suppose that $\hat{V}$ is a vector space
over $k$. Then $V=L\otimes_k\hat{V}$ is a vector space over $L$, and
we have a semilinear action given by $g\cdot (x\otimes v) =
g(x)\otimes v$. In descent theory it is proved that every semilinear
action is of this form (this is Speiser Lemma, see \cite{GS}, pp. 27). In this paper, 
we will be interested in semilinear actions which also commute with the Hopf structure.
\begin{definition}
Let $H$ be a Hopf algebra over $L$. A $G$-semilinear action on $H$ is said to be \italic{Hopf-semilinear} 
if it commutes with the Hopf structure of $H$. That is- for every $g\in G$ and $a,b\in H$ we have $g(ab)=g(a)g(b)$, 
and similar equations hold for the coproduct, counit, unit and antipode. \end{definition}
We will need to use the following lemma, whose proof is based on Speiser Lemma together with general arguments.
\begin{lemma} Let $H$ be a Hopf algebra over $L$. Suppose that $H$ has a
$G$-Hopf-semilinear action. Then the subspace of invariant elements $H^G$ is a
Hopf algebra over $k$ (the structure maps are just the restrictions of the structure maps of $H$), and $H^G\otimes_{k}L\cong H$ 
as $L$-Hopf algebras.\end{lemma}\begin{remark} The algebra $H^G$ is called a $k$-form of the $L$-Hopf algebra $H$.\end{remark}
We thus see that in order to construct Hopf
algebras over $k$, we can construct Hopf algebras over $L$ together
with semilinear actions. In descent theory, all the semilinear
actions of a given Hopf algebra are classified by a certain nonabelian cohomology group. In
our case it would be easier to find semilinear actions directly.
Forms of Hopf algebras were also considered by Radford, Taft and Wilson in \cite{RTW}, by Pareigis in \cite{Pareigis}, 
by Parker in \cite{Parker}, and by Caenepeel, Dascalescu and Le Bruyn in \cite{CDL}.
\end{section}

\begin{section}{Hopf-semilinear actions on function algebras}\label{function}
Let $L,k$ and $G$ be as before, and let $T$ be any finite group. We
consider the function algebra (which is also the dual of the group
algebra of $T$, $L[T]=(LT)^*$). This is the $L$-algebra of all the
functions from $T$ to $L$. This algebra has a basis of simple
idempotents $\{e_t\}_{t\in T}$, where $e_t(s) = \delta_{t,s}$ for
$t,s\in T$. This algebras also has a Hopf structure given by
$\Delta(e_t) = \sum_{rs=t}{e_r\otimes e_s}$. In this section we
will describe the Hopf-semilinear actions of $G$ on $L[T]$ and how the corresponding invariant Hopf algebras look like.
\begin{lemma}\label{H1} There is a one to one correspondence between Hopf-semilinear
actions on $L[T]$ and homomorphism $G\rightarrow Aut(T)$ (i.e. actions of $G$ on $T$).\end{lemma}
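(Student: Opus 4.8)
The plan is to read the correspondence off the behaviour of a semilinear Hopf action on the canonical basis of primitive idempotents. First I would observe that for each $g\in G$ the map $v\mapsto g\cdot v$ is a bijective $k$-linear map respecting multiplication and the unit (it commutes with all the structure maps), hence a ring automorphism of $L[T]$. Since $L[T]$ is a commutative semisimple $L$-algebra (indeed $L[T]\cong L^{|T|}$ as an algebra), its minimal idempotents are intrinsically defined and form exactly the set $\{e_t\}_{t\in T}$; any ring automorphism must permute this set. Note that $g\cdot e_t$ cannot be a nontrivial $L$-multiple $x\,e_{t'}$ of a basis idempotent, since $(g\cdot e_t)^2=g\cdot e_t$ forces $x^2=x$, i.e. $x\in\{0,1\}$. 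Hence there is a permutation $\sigma_g\in\mathrm{Sym}(T)$ with $g\cdot e_t=e_{\sigma_g(t)}$, and semilinearity then pins down the whole action via $g\cdot\big(\sum_t x_t e_t\big)=\sum_t g(x_t)\,e_{\sigma_g(t)}$.

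Next I would bring in the comultiplication, which is precisely where the group structure of $T$ enters (the algebra structure alone sees $T$ only as a set). Compatibility with $\Delta$ reads $\Delta(g\cdot e_t)=(g\otimes g)\Delta(e_t)$, where $g\otimes g$ is well defined on $L[T]\otimes_L L[T]$ thanks to semilinearity. Expanding both sides gives $\sum_{uv=\sigma_g(t)}e_u\otimes e_v=\sum_{rs=t}e_{\sigma_g(r)}\otimes e_{\sigma_g(s)}$, and comparing coefficients in the $L$-basis $\{e_u\otimes e_v\}$ shows that the sets $\{(\sigma_g(r),\sigma_g(s)):rs=t\}$ and $\{(u,v):uv=\sigma_g(t)\}$ coincide for every $t$. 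As $\sigma_g$ is a bijection, this is exactly the statement $\sigma_g(rs)=\sigma_g(r)\sigma_g(s)$, so $\sigma_g\in\mathrm{Aut}(T)$ (the counit condition then holds automatically, since automorphisms fix $1_T$, which is the value detected by $\epsilon(e_t)=\delta_{t,1_T}$). Because $G$ acts we have $\phi_{gh}=\phi_g\phi_h$ on $L[T]$, whence $\sigma_{gh}=\sigma_g\sigma_h$, so $g\mapsto\sigma_g$ is a homomorphism $G\to\mathrm{Aut}(T)$.

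For the converse I would start from a homomorphism $\rho:G\to\mathrm{Aut}(T)$, define $g\cdot\big(\sum_t x_t e_t\big)=\sum_t g(x_t)\,e_{\rho(g)(t)}$, and verify directly that this is a semilinear action commuting with the multiplication, unit, $\Delta$, $\epsilon$ and the antipode; each check reduces to the fact that $\rho(g)$ is a group automorphism, reversing the computations above. Finally I would note that the two constructions are mutually inverse, which yields the asserted bijection. I expect the only genuine subtlety, and the step meriting care, to be the first one: justifying that a merely $k$-semilinear (not $L$-linear) algebra automorphism sends each primitive idempotent to another primitive idempotent with scalar exactly $1$. Once the permutation $\sigma_g$ is in hand, the upgrade from permutation to group automorphism through $\Delta$ is a clean coefficient comparison.
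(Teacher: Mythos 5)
Your proposal is correct and follows essentially the same route as the paper: the paper's (much terser) proof likewise observes that a semilinear algebra automorphism must permute the simple idempotents $\{e_t\}$ and that compatibility with the coalgebra structure forces this permutation to be a group automorphism of $T$, with the converse given by the explicit formula $g\cdot xe_t = g(x)e_{\phi(g)(t)}$. Your write-up simply supplies the details (the scalar being exactly $1$, the coefficient comparison in $\Delta$) that the paper leaves to the reader.
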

\begin{remark} The reader who is familiar with descent theory will
notice that this correspondence is exactly the correspondence between $k$-forms of $L[T]$ and
$H^1(G,Aut(T))$, where $G$ acts trivially on $Aut(T)$.\end{remark}
\begin{proof} The correspondence is given in the following way:
for $\phi:G\rightarrow Aut(T)$ we have the semilinear action
$g_{\phi}\cdot xe_t = g(x)e_{\phi(g)(t)}$ for $g\in G$, $x\in L$ and
$t\in T$. Any Hopf-semilinear action is of this form due to the following
reason: since the action is by algebra automorphisms, every $g\in G$
permutes the set of simple idempotents $\{e_t\}_{t\in T}$, and so
acts on $T$. The fact that the action preserves the coalgebra
structure means that this permutation is an automorphism of $T$ as a
group.\end{proof}
\begin{remark}
In Section 3 of \cite{HS} we have described explicitly a specific
form of a specific function algebra. Let us describe the corresponding Hopf-semilinear action. 
We have an abelian Galois extension $L/k$ with an (abelian) Galois group $G$. 
We have constructed a $k$-form $H$ of the $L$-Hopf algebra $L[Z_2\ltimes G]$ 
(the action of $\Z_2=\langle\sigma\rangle$ is by inversion) which is isomorphic as an algebra to $L\oplus k[G]$.
The form $H$ corresponds to a semilinear action, which corresponds, by Lemma \ref{H1}, 
to a homomorphism $\phi:G\rightarrow Aut(\Z_2\ltimes G)$. For $g\in G$, the homomorphism $\phi(g)$ 
is the homomorphism which sends $\sigma$ to $\sigma g$ and fixes $G$ pointwise.\end{remark}

Let us describe, for a given $\phi:G\rightarrow
Aut(T)$, the structure of the algebra of invariants $(L[T])^G$. 
Let $a = \sum_{t\in T}a_te_t\in L[T]^G$. It is easy to see that the fact that $a$ is
invariant is equivalent to the fact that $g(a_t)=a_{\phi(g)(t)}$ for
every $g\in G$ and $t\in T$. In particular $a_t\in L^{stab(t)}$,
where by $stab(t)$ we denote the stabilizer of $t$ in $G$ with
respect to the action $\phi$. If we fix representatives of the
different orbits $t_1,\ldots,t_m$, then we have an isomorphism of
algebras
$$L^{stab(t_1)}\oplus L^{stab(t_2)}\oplus\ldots\oplus L^{stab(t_m)}\rightarrow (L[T])^G$$ given by
$$x\mapsto \sum_{g\in G/stab(t_i)}g(x)e_{\phi(g)t_i}$$ for $x\in L^{stab(t_i)}$.
Notice in particular that all the fields $L^{stab(t_i)}$ are
quotient of the $k$-Hopf algebra $(L[T])^G$. Can we get any subfield of
$L$ in this way? the answer is yes. Using Galois correspondence, any
subfield of $L$ is of the form $L^H$, for some $H<G$. Therefore we
need to prove that for every $H<G$ we have a group $T$ and an action
of $G$ on $T$ such that $T$ contains an element $t$ such that
$stab(t)=H$. Let us take $T=\Z_2 G/H$, the vector space over $\Z_2$
with the coset space $G/H$ as a basis. The group $G$ acts from the
left on $G/H$ and therefore also on $T$. It is clear that if we take
$t=H$ (the trivial coset), then $stab(t)=H$ as required. Since $L$
was an arbitrary Galois extension of $k$, and any finite separable extension
of $k$ is contained in its Galois closure, we have (almost) proved the
following:
\begin{theorem} Let $F/k$ be any finite separable extension. Then there
is a semisimple cosemisimple commutative Hopf algebra $H$ over $k$ such that $F$
is a quotient of $H$.\end{theorem}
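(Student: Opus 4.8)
The plan is to reduce the statement about an arbitrary finite extension $F/k$ to the concrete situation already analyzed in this section, namely the structure of the invariant algebra $(L[T])^G$ for a $G$-action on a finite group $T$. Since the section has already shown that every field of the form $L^{\mathrm{stab}(t_i)}$ appears as a quotient of the semisimple commutative Hopf algebra $(L[T])^G$, it suffices to realize $F$ itself as such an invariant subfield $L^H$ for a suitable Galois extension $L/k$, a suitable finite group $T$, and a suitable $G$-action on $T$ whose stabilizer of some point is exactly $H$.

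First I would pass from $F$ to a Galois extension: since $F/k$ is finite, it embeds into its Galois closure $L$ over $k$, and by the primitive element theorem (in the separable case) or more generally by the standard theory of Galois closures, $L/k$ is finite Galois with group $G=\mathrm{Gal}(L/k)$. By the fundamental theorem of Galois theory, $F=L^H$ for the subgroup $H=\mathrm{Gal}(L/F)<G$. This identifies $F$ as a fixed subfield $L^H$, which is precisely the shape of the fields appearing as quotients of $(L[T])^G$.

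Next I would exhibit, for this particular subgroup $H<G$, a finite group $T$ with a $G$-action admitting a point with stabilizer exactly $H$. The clean choice, as indicated in the excerpt, is $T=\mathbb{Z}_2 G/H$, the vector space over $\mathbb{Z}_2$ with basis the coset space $G/H$, on which $G$ acts by permuting basis vectors via left multiplication on $G/H$. Taking $t$ to be the trivial coset $H$ (viewed as a basis vector of $T$), one checks directly that $g\cdot t=t$ forces $gH=H$, so $\mathrm{stab}(t)=H$. Here $T$ is an elementary abelian $2$-group and the $G$-action is through permutation of a basis, hence by group automorphisms, so the correspondence of the second lemma of this section produces a genuine semilinear action on $L[T]$ respecting the Hopf structure, and the first descent lemma guarantees $(L[T])^G$ is a Hopf algebra over $k$. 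It is commutative because $L[T]$ is commutative, and semisimple because it is a direct sum of field extensions of $k$, as displayed in the orbit decomposition of $(L[T])^G$.

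The only genuine subtlety, rather than a real obstacle, is the separability needed to guarantee that $F$ sits inside a finite \emph{Galois} extension and that the Galois correspondence applies; in the inseparable case one replaces the argument by embedding the separable part and handling the purely inseparable part, but for the statement as used it suffices that the Galois closure of the separable closure of $F$ in $F$ is available, and the construction then realizes $F=L^H$ as required. Assembling these steps, $(L[T])^G$ is the desired semisimple commutative Hopf algebra over $k$ having $F=L^H=L^{\mathrm{stab}(t)}$ as a quotient, which completes the proof.
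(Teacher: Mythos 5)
Your proposal is correct and follows essentially the same route as the paper: pass to the Galois closure $L$ of $F$ with group $G$, write $F=L^H$ via the Galois correspondence, take $T=\Z_2 G/H$ with the left translation action so that the trivial coset has stabilizer exactly $H$, and read off $F$ as a summand of the semisimple commutative Hopf algebra $(L[T])^G$. Your added remark on the inseparable case is a fair caveat (the paper's phrase ``any finite extension is contained in its Galois closure'' tacitly assumes separability), but otherwise the two arguments coincide.
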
 \begin{proof}
The only thing that requires a proof is the cosemisimplicity. The function algebra $k[T]$ is cosemisimple 
if and only if $char(k)\ndivide |T|$ (this is Maschke's Theorem applied to the dual Hopf algebra). 
If $char(k)\neq 2$ we can take $T=\Z_2G/H$ as above, and if $char(k)=2$, we can take $T=\Z_3G/H$.\end{proof}
Now assume that we have an
infinite number of nonisomorphic Galois extensions of $k$ with the
same Galois group $G$ which satisfies the condition $char(k)\ndivide |G|$ (e.g. $k=\mathbb{Q}$ and $G=\Z_2$). 
Then any Galois extension $L$ of $k$ with a Galois group $G$ is a quotient of a twisted form of $k[\Z_2 G]$. 
The algebra $k[\Z_2 G]$ (and each of its forms) is semisimple and cosemisimple 
(if $char(k)=2$ we can take $k[\Z_3G]$ as in the proof of the theorem above).
It is easy to see that by considering all these forms, we get an infinite number of nonisomorpic commutative
semisimple and cosemisimple Hopf algebras of dimension $2^{|G|}$, as was claimed in Section \ref{intro}. 
This generalizes the example given in Section 2 of \cite{CDL} which shows that over a non algebraically 
closed field a group algebra of an abelian group can have infinitely many nonisomorphic forms.
\end{section}

\begin{section}{A semidirect product}\label{semidirect}
In this section we will construct some specific semidirect products of Hopf
algebras over $k$. Let $N$ and $T$ be groups such that $N$ acts on
$T$ by group automorphisms (that is- we have a homomorphism
$\psi:N\rightarrow Aut(T)$). We will construct a Hopf
algebra $k[T]\rtimes kN$ which we call the semidirect product of $kN$ and
$k[T]$. As a coalgebra, $k[T]\rtimes kN$ is $k[T]\otimes_k kN$. The
product in $k[T]\rtimes kN$ is given by the rule $$e_{t_1}\otimes n_1\cdot
e_{t_2}\otimes n_2=\delta_{t_1,\psi(n_1)(t_2)}e_{t_1}\otimes
n_1n_2.$$ In other words- $k[T]$ and $kN$ are subalgebras of
$k[T]\rtimes kN$, and $n\in N$ acts by conjugation on $e_t$ via $\psi$.
The algebra $k[T]\rtimes kN$ is a Hopf algebra. It is the bicrossed
product of the Hopf algebras $kN$ and $k[T]$. For the definition of
bicrossed products in general, see Chapter IX.2 of \cite{K}.
\begin{remark}\label{ss}
It can be seen that $k[T]\rtimes kN$ is isomorphic as an algebra to a direct sum of the form $\bigoplus_{i}M_{n_i}(kH_i)$, 
where $H_i$ are subgroups of $N$ which arise as stabilizers of element in $T$. Therefore, if $char(k)\ndivide |N|$, 
then the semidirect product $k[T]\rtimes kN$ is semisimple, and thus also every form of it. 
We will need this observation later, in order to deal with semisimplicity questions.\end{remark}
\end{section}

\begin{section}{A proof of Theorem \ref{mainth}}\label{main}
In this section we will show that $HS(k)=Br(k)$. Let $A$ be a $k$-central simple algebra. 
The algebra $A$ splits by some Galois extension $L/k$. Let $G=Gal(L/k)$. By Galois descent, 
we know that $A$ is equivalent to a crossed product algebra $L^{\alpha}_tG$, where $\alpha\in H^2(G,L^*)$, 
and the action on $L^*$ is the Galois action.
This algebra has an $L$-basis $\{U_{\sigma}\}_{\sigma\in G}$, and the multiplication is given by the rule:
$$xU_{g}yU_{h}=x\sigma(y)\alpha(g,h)U_{gh}$$ where $g,h\in G$ and $x,y\in L$. We will show that $A$ is 
(up to Brauer equivalence) a quotient of a Hopf algebra. We begin with the following definition:
\begin{definition} We say that the cocycle $\alpha$ is
\italic{finite} if all its values are roots of
unity.\end{definition}
Note that this definition depends on the
particular cocycle $\alpha$, and not just on its cohomology class
$[\alpha]$.
We will prove that $A$ is Brauer equivalent to a quotient of a Hopf algebra in the following way: 
we will first show that $A$ is Brauer equivalent to a product of cyclic algebras with a crossed product 
algebra in which the cocycle is finite, and then we will prove that a crossed product algebra with a finite 
cocycle is a quotient of a Hopf algebra. In \cite{HS} we have proved that any cyclic algebra is a quotient of a 
Hopf algebra, and therefore $[A]$ (the Brauer class of $A$) is in $HS(k)$.
\begin{remark} In case $char(k)=0$, $A$ is equivalent to a crossed product algebra with a finite cocycle, 
and we do not need to use the result from \cite{HS}.\end{remark}
The following lemma seems to be well known. We have included it here nevertheless, as it makes our construction more explicit.
\begin{lemma}\label{finite}
Let $\alpha\in H^2(G,L^*)$. Denote the order of $\alpha$ by $m$. If $char(k)=0$ or if $char(k)=p$ and $p\ndivide m$, 
then the crossed product algebra $L^{\alpha}_t G$ is Brauer equivalent to a crossed product algebra $K^{\beta}_t N$ 
where  $\beta$ is a finite cocycle.\end{lemma}
\begin{proof} Since the order of $\alpha$ is $m$, there is a function $f:G\rightarrow L^*$ which satisfies 
$$\alpha^m(g_1,g_2) =\del f (g_1,g_2)=f(g_1)g_1(f(g_2))f^{-1}(g_1g_2),$$ for $g_1,g_2\in G$. Let $K$ be a
Galois extension of $L$ which contains, for every $g\in G$, an element $r_{g}$ which satisfies $r_{g}^m = f(g)$ 
(the fact that we have such a Galois extension follows from the assumption on $m$ and $char(k)$).  If we
denote by $N$ the Galois group of $K$ over $k$, we have an onto map
$\pi:N\mapsonto G$. Define a two cocycle $\beta\in H^2(N,K^*)$ by
$$\beta(h_1,h_2)
=\alpha(\pi(h_1),\pi(h_2))r^{-1}_{\pi(h_1)}h_1(r^{-1}_{\pi(h_2)})r_{\pi(h_1h_2)}.$$
A direct calculation shows that all the values of $\beta$ are $m$-th
roots of unity, and so $\beta$ is finite. The cocycle $\beta$ is
cohomologous to $\inf_G^N(\alpha)$. By Brauer theory we thus know
that the central simple algebras $L^{\alpha}_tG$ and $K^{\beta}_t N$
are Brauer equivalent, as required.\end{proof}
\begin{remark}\label{semisimplicity} The field $K$ in the lemma can be taken to be $L(\xi_m,\{r_g\}_{g\in G})$. 
Notice that if $ord([A])=m$ is prime to $p$, and if $p\ndivide |L(\xi_m):k|$, then also $p\ndivide |L(\xi_m,\{r_g\}_{g\in G}):k|$. 
Thus, the lemma implies that if $A$ is a good algebra, then $A$ is Brauer equivalent to a crossed product algebra $K^{\beta}_tG$ 
with a finite cocycle, such that $p\ndivide |K:k|$. By the next proposition, this implies that $A$ is Brauer equivalent to a quotient 
of a finite dimensional semisimplie Hopf algebra.
\end{remark}

Suppose now that $k$ is a field of characteristic $p$, and that the order of $\alpha$, $m$, is not prime to $p$. 
Write $m=rp^e$, where $r$ is prime to $p$. Then $L^{\alpha}_tG$ is Brauer equivalent to a tensor product of the form 
$L^{\alpha_1}_tG\otimes_k L^{\alpha_2}_tG$ where $ord(\alpha_1)=r$ and $ord(\alpha_2)=p^e$. By a theorem of Teichmueller, 
if $char(k)=p$ then any central simple algebra whose order is $p^e$ is Brauer equivalent to a product of cyclic algebras 
(see Chapter 9.1 of \cite{GS}), and Brauer classes of cyclic algebras are in $HS(k)$ (see \cite{HS}). By the lemma above, $L^{\alpha_1}_tG$ 
is Brauer equivalent to a crossed product algebra in which the cocycle is finite. The proof of the following proposition 
together with the above remark therefore finishes the proof of Theorem \ref{mainth}:
\begin{proposition} Let $A=L^{\alpha}_tG$ be a crossed product algebra such that $\alpha$ is finite. Then $A$ is a quotient of a Hopf algebra.
If $char(k)\ndivide |G|$, then $A$ is a quotient of a semisimple Hopf algebra.\end{proposition}
\begin{proof}
Consider the subgroup of
$L^{\alpha}_tG$ generated by the $U_g$'s. Since $\alpha$ is finite, we get an extension of
\textbf{finite} groups $$1\rightarrow \mu\rightarrow
\widehat{G}\rightarrow G\rightarrow 1,$$ where $\mu$ is the
\textbf{finite} subgroup of $L^*$ generated by all elements of the
form $\alpha(g_1,g_2)$, for $g_1,g_2\in G$ (by assumption, they are all roots of unity). 
We thus know how to get the field $L$ as a quotient of a Hopf algbera, and how to get the subalgebra generated by the $U_g$'s 
as a quotient of a Hopf algebra (the group algebra $k\widehat{G}$). 
We now use the semidirect product construction in order to combine 
these two constructions into one Hopf algebra.
 
Let $T$ be the group
$\Z_2 G$, the vector space over $\Z_2$ with basis $G$
(multiplication in $T$ is just addition of vectors). We have a
natural action of $G$ (and thus of $\widehat{G}$, using the map
$\widehat{G}\rightarrow G$) on $T$ by left multiplication. If we
denote the action of $\widehat{G}$ on $T$ by $\psi$, we can
construct the semidirect product $k[T]\rtimes k\widehat{G}$ as explained in Section \ref{semidirect}. Notice that by Remark
\ref{ss} $k[T]\rtimes k\widehat{G}$ is semisimple if $char(k)\ndivide
|G|$ (it is easy to see by the fact that the order of $\alpha$ in $H^2(G,L^*)$ divides $|G|$, that the prime divisors of $|\mu|$ are also prime
divisors of $|G|$). Now consider the induced $L$-Hopf algebra
$X_L=L\otimes_k (k[T]\rtimes k\widehat{G})$. We have an action of $G$ on $T$ not only
from the left but also from the right by multiplication. We define
an action of $G$ on $X_L$ via $$g\star (l\otimes e_t\otimes h) =
g(l)\otimes e_{t\cdot g^{-1}}\otimes h.$$ 
We claim the following
\begin{lemma} The action $\star$ is a Hopf-semilinear action of $G$ on $X_L$.\end{lemma}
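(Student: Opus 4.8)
The plan is to verify directly, from the explicit formula, the two defining properties of a semilinear action: that $\star$ is a well-defined action of $G$ on $X_L$ by $k$-linear maps, and that it satisfies the semilinearity identity $g\star(x\cdot v)=g(x)\cdot(g\star v)$ for all $g\in G$, $x\in L$, $v\in X_L$. Both are routine computations on the basis elements $l\otimes e_t\otimes h$, so I would organize them as three short checks.

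First, for well-definedness I would observe that for each fixed $g\in G$ the operator $v\mapsto g\star v$ is simply the tensor product over $k$ of three $k$-linear maps: the Galois action of $g$ on $L$, the linear extension of the permutation $e_t\mapsto e_{t\cdot g^{-1}}$ of the basis $\{e_t\}_{t\in T}$ of $k[T]$, and the identity on $k\widehat{G}$. Since each factor is $k$-linear, their tensor product is a well-defined $k$-linear endomorphism of $X_L=L\otimes_k k[T]\otimes_k k\widehat{G}$, with no need to check compatibility with the tensor relations by hand.

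Next, to see that $\star$ is a group action the key point is that right multiplication by $g^{-1}$ gives a \emph{left} action of $G$ on $T$: writing $\rho_g(t)=t\cdot g^{-1}$ one has $\rho_{g_1}\rho_{g_2}(t)=t\cdot(g_2^{-1}g_1^{-1})=t\cdot(g_1g_2)^{-1}=\rho_{g_1g_2}(t)$. Since the Galois action is itself a left action, applying $g_1\star$ after $g_2\star$ to $l\otimes e_t\otimes h$ yields $(g_1g_2)(l)\otimes e_{t\cdot(g_1g_2)^{-1}}\otimes h=(g_1g_2)\star(l\otimes e_t\otimes h)$, and the identity of $G$ acts trivially.

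Finally, for semilinearity recall that $L$ acts on $X_L$ through the first tensor factor, $x\cdot(l\otimes e_t\otimes h)=(xl)\otimes e_t\otimes h$. Then $g\star\bigl(x\cdot(l\otimes e_t\otimes h)\bigr)=g(xl)\otimes e_{t\cdot g^{-1}}\otimes h$, whereas $g(x)\cdot\bigl(g\star(l\otimes e_t\otimes h)\bigr)=(g(x)g(l))\otimes e_{t\cdot g^{-1}}\otimes h$; the two agree precisely because the Galois action is multiplicative, $g(xl)=g(x)g(l)$. I do not expect any genuine obstacle in this lemma; the only point demanding care is bookkeeping—keeping the left Galois action on $L$ separate from the right multiplication on $T$—and it is exactly the choice of \emph{right} multiplication (which commutes with the left multiplication built into $\psi$) that will be needed afterwards for $\star$ to respect the Hopf structure, though that stronger property is not part of the present statement.
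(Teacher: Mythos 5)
Your verification is correct and follows the same route as the paper: a direct check on basis elements, with the observation that right multiplication by $g^{-1}$ defines a left action on $T$ and that the Galois action supplies the semilinearity identity $g(xl)=g(x)g(l)$. However, there is a mismatch of emphasis worth flagging. The paper's one-line proof says the crux is that the left and right multiplication actions of $G$ on $T$ commute — and notice that nothing in your three checks (well-definedness, group action, $L$-semilinearity) actually uses that commutation. It is needed only to show that $\star$ acts by Hopf algebra maps, i.e.\ that $g\star$ is compatible with the product $e_{t_1}\otimes h_1\cdot e_{t_2}\otimes h_2=\delta_{t_1,\psi(h_1)(t_2)}e_{t_1}\otimes h_1h_2$: one needs $\delta_{t_1\cdot g^{-1},\,\psi(h_1)(t_2)\cdot g^{-1}}=\delta_{t_1\cdot g^{-1},\,\psi(h_1)(t_2\cdot g^{-1})}$, which holds precisely because $(\bar h_1 t_2)g^{-1}=\bar h_1(t_2 g^{-1})$. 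You correctly observe that this stronger compatibility is not literally demanded by the definition of ``semilinear action,'' and you defer it — but the paper's proof is clearly meant to cover it, since the very next step forms $H=(X_L)^G$ and invokes the descent lemma, which requires the semilinear action to respect the Hopf structure. So while your proof of the literal statement is complete, as a proof of what the lemma is actually used for it omits the one computation that carries the content; you should add the check that $\star$ commutes with multiplication (and, more trivially, with the comultiplication, counit and antipode, which do not move the $T$-index in a way that interacts with $g^{-1}$).
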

\begin{proof} This is a straightforward verification.
The crux of the proof is the fact that the two actions of $G$ on $T$
from the left and from the right commute with each other.\end{proof}

Consider now the $k$-Hopf algebra $H=(X_L)^G$ (which is semisimple in case $char(k)\ndivide |G|$), where we take
invariants with respect to the $\star$ action. We claim that we have
an onto map $H\mapsonto L^{\alpha}_t G$. To see why this is true, we first decompose $H$ as an algebra
(we do not care any more about the Hopf structure at this stage).

Denote by $H_1$ the
intersection of $(X_L)^G$ with the $L$ subspace spanned by all
$1\otimes e_{t}\otimes\hat{g}$, where $\hat{g}\in \widehat{G}$, and
$t\in G$ (we can consider $G$ as the subset of $T$ which contains
the basis elements). 
Denote by $H_2$ the intersection of
$(X_L)^G$ with the $L$ subspace spanned by all $1\otimes
e_{t}\otimes\hat{g}$, where $\hat{g}\in \widehat{G}$ and $t\notin
G$. Since $G$, as a subset of $T$, is stable under the action of $\widehat{G}$
from the right and under the action of $G$ from the left, we see that $H_1$ and $H_2$ are two sided ideals,
and we have a decomposition of algebras $H=H_1\oplus H_2$. 

Since $H_1$ is a quotient of $H$, it will be enough to prove 
that $L^{\alpha}_tG$ is a quotient of the algbera $H_1$. 
In order to prove this, we give a neater description of $H_1$.
The group $\widehat{G}$ acts on $L$ via $\pi:\widehat{G}\rightarrow G$. 
We define the algebra $B$ to be $L\otimes_k k\widehat{G}$ as a vector space, 
with the product $$(l_1\otimes\hat{g_1})\cdot(l_2\otimes\hat{g_2}) = l_1\hat{g_1}(l_2)\otimes \hat{g_1}\hat{g_2}.$$ 
Thus, $B$ is the semidirect product (of algebras) of $L$ and $k\widehat{G}$. We define a linear map 
$\phi:B\rightarrow H_1$ by $$l\otimes \hat{g}\mapsto \sum_{g\in G}g(l)\otimes e_{g^{-1}}\otimes\hat{g}.$$
The following lemma is quite easy to prove
\begin{lemma} The map $\phi$ is an isomorphism of algebras.\end{lemma}
By the lemma, it is enough to prove that we have an onto map $B\rightarrow L^{\alpha}_tG$.
To do this, recall that the group $\widehat{G}$ was constructed as a subgroup of the group of
invertible elements in $L^{\alpha}_t G$, and thus we have a natural algebra map $k\widehat{G}\rightarrow L^{\alpha}_tG$. 
We define the following linear map
$$\Psi:B\rightarrow L^{\alpha}_t G$$
$$l\otimes \hat{g}\mapsto l\hat{g}$$
We claim the following:
\begin{lemma} The map $\Psi$ is an onto algebra map.\end{lemma}
\begin{proof}
The fact that $\Psi$ is onto is easily seen by the fact that $L^{\alpha}_tG$  is spanned over $L$ by the $U_g$'s. 
The fact that $\Psi$ is an algebra map follows from a direct calculation.\end{proof}
We thus see that $L^{\alpha}_tG$ is a quotient of a Hopf algebra. This finished the proof of Theorem \ref{mainth}.\end{proof}

\begin{remark}
The Hopf algebra $H$ is actually a form of a group algebra (i.e. $H\otimes_k L$ is a group algebra). This is due to the following fact: the group $T$ is abelian, 
and therefore $k[T]\cong k T^{\sharp}$, where $T^{\sharp}$ is the character group of $T$. The semidirect product $k[T]\rtimes k\widehat{G}$ 
can therefore be seen to be the group algebra of the semidirect product $\widehat{G}\ltimes T^{\sharp}$. 
Since $L\otimes_k (k[T]\rtimes k\widehat{G})$ and $L\otimes_k H$ are isomorphic, we see that $H$ is a form of a group algebra.\end{remark}
\end{section}

\end{document}